\newtheorem{theorem}{Theorem}[section]
\newtheorem{proposition}[theorem]{Proposition}
\newtheorem{lemma}[theorem]{Lemma}
\newtheorem{corollary}[theorem]{Corollary}
\theoremstyle{definition}
\newtheorem{definition}[theorem]{Definition}
\newtheorem{example}[theorem]{Example}
\theoremstyle{remark}
\numberwithin{equation}{section}
\def\Ker{{\rm Ker}}
\begin{document}

\title{Gaussian property in amalgamated algebras along an ideal}

\author[Najib Mahdou]{Najib Mahdou}
\address{Department of Mathematics, Faculty of Science and Technology of Fez, Box 2202, University S. M.
Ben Abdellah Fez, Morocco}
 \email{mahdou@hotmail.com}

 \email{moutu\_2004@yahoo.fr}

\author[Moutu Abdou Salam Moutui]{Moutu Abdou Salam Moutui}


\subjclass[2000]{16E05, 16E10, 16E30, 16E65}



\keywords{Amalgamated algebra along an ideal, Gaussian rings, arithmetical rings, amalgamated duplication, trivial rings extension.}

\begin{abstract} Let $f : A \rightarrow B$ be a ring homomorphism and  $J$ be an ideal of $B$. In this paper, we investigate the transfer of Gaussian
property to the amalgamation of $A$ with $B$ along $J$ with respect to $f$
(denoted by $A\bowtie^fJ),$ introduced and studied by D'Anna, Finocchiaro and Fontana in 2009. \smallskip
\end{abstract}

\maketitle

\section{Introduction} All rings considered in this paper are commutative with identity elements and all modules are unital. In 1932, Pr\"ufer introduced and studied in \cite{P} integral domains in which every finitely generated ideal is invertible. In their recent paper devoted to Gaussian properties, Bazzoni and Glaz have proved that a Pr\"ufer ring satisfies any of the other four Pr\"ufer conditions if and only if its total ring of quotients satisfies that same condition \cite[Theorems 3.3 \& 3.6 \& 3.7 \& 3.12]{BG2}. Vasconcelos regarding a conjecture of Kaplansky: The content ideal of a Gaussian polynomial is an invertible (or locally principal) ideal. The reason behind the conjecture is that the converse holds \cite{T}. Vasconcelos  and  Glaz answered the question affirmatively in a large number of cases \cite{GV,gl}. The affirmative answer was later extended by Heinzer and Huneke \cite{HH} to include all Noetherian domains. Recently the question was answered affirmatively for all domains by Loper and Roitman \cite{LR}, and finally to non-domains provided the content ideal has zero annihilator by Lucas \cite{Lu}. The article by Corso and Glaz \cite{c} gives a good account of what was known about the problem prior to the year 2000 or so. At the October 2003 meeting of the American Mathematical Society in Chapel Hill and North Carolina, Loper presented a proof that every Gaussian polynomial over an integral domain has invertible content (in fact, for any ring that is locally an integral domain). The basis of the proof presented in this paper is highly dependent on the work of Loper and his coauthor Roitman. A flurry of related research ensued, particularly investigations involving Dedekind-Mertens Lemma and various extensions of the Gaussian property. \cite{and1} and \cite{c} provide a survey of results obtained up to year 2000 and an extensive bibliography. A related, but different, question is : How Pr\"ufer-like is a Gaussian
ring? Various aspects of the nature of Gaussian rings were investigated in Tsang's thesis \cite{T}, Anderson and Camillo \cite{and3}, and Glaz \cite{G2}. While all of those works touch indirectly on the mentioned question, it is Glaz  \cite{{G2}} that asks and provides some direct answers. A problem initially associated with Kaplansky and his student Tsang \cite{BG,GV,Lu,T} and also termed as Tsang-Glaz-Vasconcelos conjecture in \cite{HH} sustained that every nonzero Gaussian polynomial over a domain has an invertible (or, equivalently, locally principal) content ideal." It is well-known that a polynomial over any ring is Gaussian if its content ideal is locally principal. The converse is precisely the object of Kaplansky-Tsang-Glaz-Vasconcelos conjecture extended to those rings where every Gaussian polynomial has locally principal content ideal. In \cite{bkm}, the authors examined the transfer of the Pr\"ufer conditions and obtained further evidence for the validity of Bazzoni-Glaz conjecture sustaining that "the weak global dimension of a Gaussian ring is 0, 1, or $\infty$" \cite{BG2}. Notice that both conjectures share the common context of rings. They gave new examples of non-arithmetical Gaussian rings as well as arithmetical rings with weak global dimension strictly greater than one. Abuihlail, Jarrar and Kabbaj studied in \cite{abjk} the multiplicative ideal structure of commutative rings in which every finitely generated ideal is quasi-projective. They provide some preliminaries quasi-projective modules over commutative rings and they investigate the correlation with well-known Pr\"ufer conditions; namely, they proved that this class of rings stands strictly between the two classes of arithmetical rings and Gaussian rings. Thereby, they generalized Osofsky's theorem on the weak global dimension of arithmetical rings and partially resolve Bazzoni-Glaz's related conjecture on Gaussian rings. They also established an analogue of Bazzoni-Glaz results on the transfer of Pr\"ufer conditions between a ring and its total ring of quotients. In \cite{CJKM}, the authors studied the transfer of the notions of local Pr\"ufer ring and total ring of quotients. They examined the arithmetical, Gaussian, and fqp conditions to amalgameted duplication along an ideal. They also investigated the weak global dimension of an amalgamation and its possible inheritance of the semihereditary condition. At this point, we make the following definition:
\begin{definition}  Let $R$ be a commutative ring.\\
\begin{enumerate}
\item $R$ is called an \emph{arithmetical ring} if the lattice formed by its ideals is distributive (see \cite{Fu}). \\
\item $R$ is called a \emph{Gaussian ring} if for every $f, g \in R[X]$, one has the content ideal equation $c(fg) = c(f)c(g)$ (see \cite{T}).\\
\item $R$ is called a \emph{$Pr\ddot{u}fer$ ring} if every finitely generated regular ideal of $R$ is invertible (see \cite{BS,Gr}).
\end{enumerate}
\end{definition}
In the domain context, all these forms coincide with the definition of a $Pr\ddot{u}fer$ domain. Glaz \cite{G3} provides
examples which show that all these notions are distinct in the context of arbitrary rings. The following diagram of implications
 summarizes the relations between them \cite{BG,BG2,G2,G3,LR,Lu,T}:
 \begin{center}
 Arithmetical $\Rightarrow$ Gaussian $\Rightarrow$ $Pr\ddot{u}fer$
\end{center}
and examples are given in \cite{G3} to show that, in general, the implications cannot be reversed.
Arithmetical and Gaussian notions are local; i.e., a ring is arithmetical (resp., Gaussian) if
and only if its localizations with respect to maximal ideals are arithmetical (resp., Gaussian).
We will make frequent use of an important characterization of a local Gaussian ring;
namely, "for any two elements $a,b$ in the ring, we have $<a,b>^2=<a^2>$ or $<b^2>$; moreover, if
$ab=0$ and, say, $<a,b>^2=<a^2>$, then $b^2=0$" (by \cite[Theorem 2.2 ]{BG2}).\\

In this paper, we study the transfer of Gaussian property in
amalgamation of rings issued from local rings, introduced and
studied by D'Anna, Finocchiaro and Fontana in \cite{AFF1,AFF2} and
defined as follows :

\begin{definition}

Let $A$ and $B$ be two rings with unity, let $J$ be an ideal of
$B$ and let $f: A\rightarrow B$ be a ring homomorphism. In this
setting, we can consider the following subring of $A\times B$:
\begin{center} $A\bowtie^{f}J: =\{(a,f(a)+j)\mid a\in A,j\in
J\}$\end{center} called \emph{the amalgamation of $A$ and $B$
along $J$ with respect to $f$}. In particular, they have studied amalgmations in the frame of pullbacks which allowed them to establish
numerous (prime) ideal and ring-theoretic basic properties for this new construction. This
construction is a generalization of \emph{the amalgamated
duplication of a ring along an ideal} (introduced and studied by
D'Anna and Fontana in \cite{A, AF1, AF2}). The interest of amalgamation
resides, partly, in its ability to cover several basic constructions in commutative algebra,
including pullbacks and trivial ring extensions (also called Nagata's idealizations) (cf. \cite[page 2]{Nagata}). Moreover, other
classical constructions (such as the $A+XB[X]$, $A+XB[[X]]$ and the $D+M$ constructions) can be studied as particular cases of the amalgamation (\cite[Examples 2.5 and 2.6]{AFF1}) and other classical constructions, such as the CPI extensions (in the sense of Boisen and Sheldon \cite{Boisen}) are strictly related to it (\cite[Example 2.7 and Remark 2.8]{AFF1}). In \cite{AFF1}, the authors studied the basic properties of this construction (e.g., characterizations for $A\bowtie^{f}J$ to be a Noetherian ring, an integral domain, a reduced ring) and they characterized those distinguished pullbacks that can be expressed as an amalgamation. Moreover, in \cite{AFF2}, they pursued the investigation on the structure of the rings of the form $A\bowtie^{f}J$, with particular attention to the prime spectrum, to the chain properties and to the Krull dimension.
\end{definition}

\section{Transfer of Gaussian property in amalgamated algebras along an ideal}\label{sec:2}

\bigskip

The main Theorem of this paper develops a result on the transfer of the Gaussian property to amalgamation of rings issued from local rings.
\begin{theorem}\label{thm0} Let $(A,m)$ be a local ring, $B$ be a ring, $f : A \rightarrow B$ be a ring homomorphism and  $J$ be a  proper ideal of
$B$ such that $J\subseteq Rad(B)$. Then, the following statements hold :\\
\begin{enumerate}
\item If $A\bowtie^{f}J$ is Gaussian, then so are $A$ and $f(A)+J.$

\item Assume that $J^2=0.$ Then $A\bowtie^{f}J$ is Gaussian if and only if so is $A$ and $f(a)J=f(a)^2J$ $\forall$ $ a\in m.$
\item Assume that $f$ is injective. Then  two cases are possible: \\
                                 Case 1 : $f(A)\cap J=(0).$ Then $A\bowtie^{f}J$ is Gaussian if and only if so is $f(A)+J$.\\
                                 Case 2 : $f(A)\cap J\neq(0)$. Assume that $A$ is reduced. Then $A\bowtie^{f}J$ is Gaussian if and only if so is $A,$ $J^2=0$ and $f(a)J=f(a)^2J$ $\forall$ $ a\in m.$
\item Assume that $f$ is not injective. Then two cases are possible: \\
                                 Case 1 : $J\cap Nilp(B)=(0)$. If $A$ is reduced, then $A\bowtie^{f}J$ is not Gaussian.\\
                                 Case 2 : $J\cap Nilp(B)\neq(0).$ Assume that $A$ is reduced. Then $A\bowtie^{f}J$ is Gaussian if and only if so is $A$, $J^2=0$ and $f(a)J=f(a)^2J$ $\forall$ $a\in m$.
                                \end{enumerate}
\end{theorem}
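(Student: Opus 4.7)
The plan is to run everything through the local Gaussian characterization recalled in the introduction (Bazzoni--Glaz): a local ring $R$ is Gaussian iff for every $a,b\in R$ either $\langle a,b\rangle^{2}=\langle a^{2}\rangle$ or $\langle a,b\rangle^{2}=\langle b^{2}\rangle$, with the constraint that $ab=0$ together with $\langle a,b\rangle^{2}=\langle a^{2}\rangle$ forces $b^{2}=0$. Since $(A,m)$ is local and $J\subseteq\mathrm{Rad}(B)$, the amalgamation $A\bowtie^{f}J$ is also local (by D'Anna--Finocchiaro--Fontana), so it suffices to test this criterion on pairs of elements.

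For (1) one uses the two surjective ring maps $\pi_{A}\colon A\bowtie^{f}J\to A$, $(a,f(a)+j)\mapsto a$, and $\pi_{B}\colon A\bowtie^{f}J\to f(A)+J$, $(a,f(a)+j)\mapsto f(a)+j$; since the Gaussian property passes to quotients, both $A$ and $f(A)+J$ are Gaussian whenever $A\bowtie^{f}J$ is. For (2) forward, (1) already gives $A$ Gaussian, and the relation $f(a)J=f(a)^{2}J$ for $a\in m$ is extracted by applying the local characterization to $\alpha=(a,f(a))$ and $\beta=(0,j)$: since $J^{2}=0$ makes $\beta^{2}=0$, either $\alpha\beta\in\langle\alpha^{2}\rangle$ (and unpacking this containment shows $f(a)j\in f(a)^{2}J$), or the alternative forces $\alpha^{2}=0$ so that $f(a)^{2}J=0$ trivially contains $f(a)j=0$. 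For (2) converse, I take a generic pair $\alpha=(a,f(a)+j_{1})$, $\beta=(b,f(b)+j_{2})$; the cases $a\notin m$ or $b\notin m$ turn the corresponding element into a unit (using $J\subseteq\mathrm{Rad}(B)$) and are immediate, so assume $a,b\in m$. Using Gaussianity of $A$, pick $x,y\in A$ with (say) $b^{2}=xa^{2}$ and $ab=ya^{2}$, and search for $\gamma_{1}=(x,f(x)+\ell)$, $\gamma_{2}=(y,f(y)+\mu)$ witnessing $\beta^{2}=\gamma_{1}\alpha^{2}$ and $\alpha\beta=\gamma_{2}\alpha^{2}$. Expanding and using $J^{2}=0$ to kill cross terms, the second-coordinate requirements reduce to solving $\ell f(a)^{2}$ and $\mu f(a)^{2}$ inside $f(a)^{2}J$, which is exactly what $f(a)J=f(a)^{2}J$ (and its consequence $f(b)J=f(b)^{2}J$, derived from $f(b)^{2}=f(x)f(a)^{2}$) provides. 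The moreover clause in $A\bowtie^{f}J$ will follow from the moreover clause in $A$ together with $f(b)k\in f(b)J=f(b)^{2}J=0$ when $b^{2}=0$.

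For (3) and (4) the strategy is to reduce to (2). In (3) Case 1, injectivity of $f$ plus $f(A)\cap J=(0)$ makes $\pi_{B}$ injective, hence an isomorphism, and the Gaussian properties of source and target coincide. In (3) Case 2, choose $a_{0}\in A$ with $f(a_{0})\in J\setminus\{0\}$; because $J\subseteq\mathrm{Rad}(B)$ excludes units we automatically have $a_{0}\in m$, and both $(a_{0},0)$ and $(0,f(a_{0}))$ lie in $A\bowtie^{f}J$. Testing the local characterization on the pair $(a_{0},0),(0,j)$ for arbitrary $j\in J$, combined with $A$ reduced (so $a_{0}^{2}\neq 0$), forces $j^{2}=0$ through the moreover clause; then testing on $(0,j),(0,j')$ forces $jj'=0$, giving $J^{2}=0$, and (2) concludes. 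In (4) Case 1, pick instead $a_{1}\in\ker(f)\setminus\{0\}$ (necessarily in $m$), so $(a_{1},0)\in A\bowtie^{f}J$, and test Gaussianity on $(a_{1},0),(0,j)$ for any nonzero $j\in J$: both moreover alternatives are simultaneously blocked, by $A$ reduced (ruling out $a_{1}^{2}=0$) and by $J\cap\mathrm{Nilp}(B)=(0)$ (ruling out $j^{2}=0$). In (4) Case 2 the element $a_{1}\in\ker(f)$ plays the role that $a_{0}$ played in (3) Case 2, forcing $J^{2}=0$ and reducing to (2).

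The main obstacle is the converse of (2): one has to simultaneously match first coordinates (via the Gaussian relation in $A$) and second coordinates (finding explicit $\ell,\mu\in J$), and the twin hypotheses $J^{2}=0$ and $f(a)J=f(a)^{2}J$ interlock to make this possible, the former killing all cross products of $J$-elements and the latter providing the divisibility of $J$ by squares that the second coordinate requires. The remaining arguments in (3) and (4) then essentially amount to arranging that enough test elements of the form $(a_{0},0)$ or $(a_{1},0)$ actually live in $A\bowtie^{f}J$, so that the local characterization can be applied to extract nilpotency information about $J$.
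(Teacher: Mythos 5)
Your proposal is correct and follows essentially the same route as the paper: the localness of $A\bowtie^{f}J$ via $J\subseteq Rad(B)$, the factor-ring argument for (1), the explicit two-element witnesses for the converse of (2), the isomorphism with $f(A)+J$ in (3) Case 1, and the test pairs $(a_0,0),(0,j)$ (resp.\ $(a_1,0),(0,j)$ with $a_1\in\Ker(f)$) to force $J^2=0$ or a contradiction in the remaining cases. The only cosmetic difference is that you derive $jj'=0$ by testing $(0,j),(0,j')$ inside $A\bowtie^{f}J$, whereas the paper works in the local Gaussian ring $f(A)+J$; the computation is the same.
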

Before proving Theorem \ref{thm0}, we establish the following Lemma.
\begin{lemma}\label{lem1}
Let $(A,B)$ be a pair of rings, $f : A \rightarrow B$ be a ring homomorphism and  $J$ be a proper ideal of
$B$. Then, $A\bowtie^{f}J$ is local if and only if so is $A$ and $J\subseteq Rad(B)$.
\end{lemma}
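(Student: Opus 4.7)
The natural projection $\pi:A\bowtie^f J\to A$ given by $(a,f(a)+j)\mapsto a$ is a surjective ring homomorphism with kernel $\{0\}\times J$, and I plan to use it as the organizing tool for both directions of the equivalence, combined with a direct unit analysis in $A\bowtie^f J$.

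For the forward direction, assume $A\bowtie^f J$ is local. Then $A\cong (A\bowtie^f J)/\ker\pi$ is a quotient of a local ring, hence local; call its maximal ideal $\mathfrak m$. Every element of $\ker\pi$ has first coordinate $0$ and is thus a non-unit, so $\ker\pi$ is contained in the unique maximal ideal $M$ of $A\bowtie^f J$; factoring $\pi$ through $M$ then identifies $M$ with $\pi^{-1}(\mathfrak m)=\{(a,f(a)+j):a\in\mathfrak m,\,j\in J\}$. To deduce $J\subseteq\mathrm{Rad}(B)$, I would observe that for any $j\in J$ the element $(1,1-j)=(1,f(1)+(-j))$ lies in $A\bowtie^f J$ and has first coordinate $1\notin\mathfrak m$, so it is outside $M$ and therefore a unit; projecting to the second coordinate (a ring homomorphism $A\bowtie^f J\to B$) forces $1-j$ to be a unit in $B$. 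Since $J$ is a $B$-ideal, replacing $j$ by $jb$ shows $1-jb\in U(B)$ for every $b\in B$, i.e.\ $j\in\mathrm{Rad}(B)$.

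For the converse, assume $A$ is local with maximal ideal $\mathfrak m$ and $J\subseteq\mathrm{Rad}(B)$, and set $M:=\{(a,f(a)+j):a\in\mathfrak m,\,j\in J\}$. Routine verifications show that $M$ is a proper ideal of $A\bowtie^f J$, so it suffices to show that every $(a,f(a)+j)\notin M$ is a unit. Such an element must have $a\notin\mathfrak m$, hence $a$ is a unit in $A$ and $f(a)$ is a unit in $B$; writing $f(a)+j=f(a)\bigl(1+f(a)^{-1}j\bigr)$ with $f(a)^{-1}j\in J\subseteq\mathrm{Rad}(B)$, we see that $f(a)+j$ is a unit in $B$.

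The only real obstacle is to exhibit an inverse that actually lives in the subring $A\bowtie^f J$, not merely in $A\times B$. Setting $c:=(f(a)+j)^{-1}\in B$, a short calculation gives
\[
c - f(a^{-1})\;=\;(f(a)+j)^{-1}-f(a)^{-1}\;=\;-j\,f(a)^{-1}c\;\in\;J,
\]
so $c=f(a^{-1})+j'$ with $j':=c-f(a^{-1})\in J$, which means $(a^{-1},c)\in A\bowtie^f J$ is a genuine element and is the desired inverse of $(a,f(a)+j)$. This closes the argument.
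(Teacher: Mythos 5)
Your proof is correct, but it takes a genuinely different route from the paper. The paper's argument is a direct appeal to the description of $Max(A\bowtie^{f}J)$ from D'Anna--Finocchiaro--Fontana \cite[Proposition 2.6 (5)]{AFF2}: the maximal ideals are the $\mathfrak{m}\bowtie^{f}J$ for $\mathfrak{m}\in Max(A)$ together with the ideals $\overline{Q}^{f}$ coming from maximal ideals $Q$ of $B$ not containing $J$, and locality of $A\bowtie^{f}J$ is read off as the statement that this list has exactly one entry. Your proof is self-contained: you get locality of $A$ from the first projection, deduce $J\subseteq Rad(B)$ by observing that $(1,1-jb)$ must be a unit and pushing it to the second coordinate, and in the converse direction you verify directly that every element outside $\mathfrak{m}\bowtie^{f}J$ is invertible, including the one step that actually requires care --- checking that the candidate inverse $(a^{-1},(f(a)+j)^{-1})$ lies in the subring $A\bowtie^{f}J$, which your identity $(f(a)+j)^{-1}-f(a)^{-1}=-jf(a)^{-1}(f(a)+j)^{-1}\in J$ settles. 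What the paper's approach buys is brevity and the full picture of the maximal spectrum at no extra cost, given the cited machinery; what yours buys is independence from that external result and an explicit, elementary identification of the maximal ideal and of inverses. Both arguments are sound.
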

\begin{proof}
By \cite[Proposition 2.6 (5)]{AFF2}, $Max(A\bowtie^{f}J)=\{m\bowtie^{f}J$ / $m\in Max(A)\}\cup\{\overline{Q}^f\}$ with $Q\in Max(B)$ not containing $V(J)$
and $\overline{Q}^f:=\{(a,f(a)+j)$ / $a\in A, j\in J$ and $f(a)+j \in Q$ \}.
Assume that $A\bowtie^{f}J$ is local. It is clear that $A$ is local by the above characterization of $Max(A\bowtie^{f}J)$. We claim that
$J\subseteq Rad(B)$. Deny. Then there exist $Q\in Max(B)$ not containing $V(J)$ and so $Max(A\bowtie^{f}J)$ contains at least two maximal
ideals, a contradiction since $A\bowtie^{f}J$ is local. Hence, $J\subseteq Rad(B)$. Conversely, assume that $(A,m)$ is local and $J\subseteq Rad(B)$.
Then $J$ is contained in $Q$ for all $Q\in Max(B)$. Consequently, the set $\{\overline{Q}^f\}$ is empty. And so
$Max(A\bowtie^{f}J)=\{m\bowtie^{f}J/m\in Max(A)\}$. Hence, $m\bowtie^{f}J$ is the only maximal ideal of $A\bowtie^{f}J$ since $(A,m)$ is local.
Thus, $(A\bowtie^{f}J,M)$ is local with $M=m\bowtie^{f}J$, as desired.\end{proof}

\begin{proof}[Proof of Theorem \ref{thm0}]By Lemma \ref{lem1}, $(A\bowtie^{f}J,m\bowtie^{f}J)$ is local since $(A,m)$ is local and $J\subseteq Rad(B)$.\\
$(1)$ If $A\bowtie^{f}J$ is Gaussian, then $A$ and $f(A)+J$ are Gaussian rings since the Gaussian property is stable under factor ring ( here $A\cong \frac{A\bowtie^{f}J}{\{0\}\times \{J\}}$ and $f(A)+J\cong \frac{A\bowtie^{f}J}{f^{-1}(J)\times \{0\}},$ by \cite[Proposition 5.1 (3)]{AFF1}).\\
$(2)$ Assume that $J^2=0$. \\
 If $A\bowtie^{f}J$ is Gaussian, then so is $A$ by $(1)$. We claim that $f(a)J=f(a)^2J$ for all $a\in m.$ Indeed, it is clear that $f(a)^2J\subseteq f(a)J.$ Conversely, let $x\in J$ and let $a\in m$. Clearly, $(a,f(a))$ and $(0,x)$  are elements of $ A\bowtie^{f}J$. And so $<(a,f(a)),(0,x)>^2=<(a,f(a))^2>$ since $J^2=0$. It follows that $xf(a)=jf(a)^2$ for some $j\in J$. Hence, $f(a)J=f(a)^2J$. Conversely, suppose $A$ is Gaussian and $f(a)J=f(a)^2J$ for all $ a\in m$. We claim that $A\bowtie^{f}J$ is Gaussian. Indeed, Let $(a,f(a)+i)$ and $(b,f(b)+j)\in A\bowtie^{f}J$. Then $a$ and $b\in A$. We may assume that $a,b\in m$ and $<a,b>^2=<a^2>$. Therefore, $b^2=a^2x$ and $ab=a^2y$ for some $x,y\in A.$ Moreover $ab=0$ implies that $b^2=0$. By assumption, there exist $j_1,i_1,j_2,i_2,i_3\in J$ such that $2f(b)j=f(a)^2f(x)j_1,$ $2f(a)if(x)=f(a)^2i_1,$ $f(a)j=f(a)^2j_2$, $f(b)i=f(a)^2f(x)i_2$ and $2f(a)if(y)=f(a)^2i_3$. In view of the fact $J^2=0,$ one can easily check that $(b,f(b)+j)^2=(a,f(a)+i)^2(x,f(x)+f(x)j_1-i_1)$ and $(a,f(a)+i)(b,f(b)+j)=(a,f(a)+i)^2(y,f(y)+f(x)i_2+j_2-i_3)$. Moreover, assume that $(a,f(a)+i)(b,f(b)+j)=(0,0)$. Hence, $ab=0$ and so $b^2=0.$ Consequently, $(b,f(b)+j)^2=(0,0)$. Finally, $A\bowtie^{f}J$ is Gaussian.\\
$(3)$ Assume that $f$ is injective.\\
Case 1 : Suppose that $f(A)\cap J=(0)$. If $A\bowtie^{f}J$ is Gaussian, then so is $f(A)+J$ by $(1)$. Conversely, assume that $f(A)+J$ is Gaussian. We claim that the natural projection :\\
$p:A\bowtie^fJ \rightarrow f(A)+J$ \\
$p((a,f(a)+j))=f(a)+j$ is a ring isomorphism. Indeed, it is clear that $p$ is surjective. It remains to show that $p$ is injective. Let $(a,f(a)+j)$ $\in$ $Ker(p),$ it is clear that $f(a)+j=0$. And so $f(a)=-j \in f(A)\cap J=(0)$. Consequently, $f(a)=-j=0$ and so $a=0$ since $f$ is injective. It follows that $(a,f(a)+j)=(0,0)$. Hence, $p$ is injective. Thus, $p$ is a ring isomorphism. The conclusion is now straightforward.\\
 Case 2 : Assume that $f(A)\cap J\neq(0)$ and $A$ is reduced.\\
  By $(2)$ above, it remains to show that if $A\bowtie^{f}J$ is Gaussian, then $J^2=0$. Assume that $A\bowtie^{f}J$ is Gaussian. We claim that $J^2=0$. Indeed, let $0\neq f(a)\in f(A)\cap J$ and let $x,y\in J.$ Clearly, $(0,x)$ and $(a,0)$ are elements of $A\bowtie^{f}J$. So, we have $<(a,0),(0,x)>^2=<(a,0)^2>$ or $<(0,x)^2>$. It follows that $x^2=0$ or $a^2=0$. Since $A$ is reduced and $0\neq a$, then $a^2\neq 0$. Hence, $x^2=0$. Likewise $y^2=0$. Therefore, $xy=0$ since $J$ is an ideal of the local Gaussian ring $f(A)+J$ and $<x,y>^2=<x^2,y^2,xy>=<x^2>=0$ or $<y^2>=0$. Hence, $J^2=0$, as desired.\\
      $(4)$ Assume that $f$ is not injective.\\
Case 1 : Suppose that $J\cap Nilp(B)=(0)$ and $A$ is reduced. We claim that $A\bowtie^{f}J$ is not Gaussian. Deny. Using the fact $f$ is not injective, there is some $0\neq a\in Ker(f)$ and so $(a,f(a))=(a,0)\in A\bowtie^{f}J$. Let $x\in J,$ then $(0,x)\in A\bowtie^{f}J$. We have $<(a,0),(0,x)>^2=<(a,0)^2>$ or $<(0,x)^2>$. And so it follows that $x^2=0$ or $a^2=0$. Since $A$ is reduced and $0\neq a$, then $a^2\neq 0$. Hence, $x^2=0$. Therefore, $x\in J\cap Nilp(B)=0$. So, $x=0$. Hence, we obtain $J=0$, a contradiction since $J$ is a proper ideal of $B$. Thus, $A\bowtie^{f}J$ is not Gaussian, as desired.\\
Case 2 : Assume that $J\cap Nilp(B)\neq(0)$ and $A$ is reduced.\\
 By $(2),$ it remains to show that if $A\bowtie^{f}J$ is Gaussian, then $J^2=0$. Suppose that $A\bowtie^{f}J$ is Gaussian. Since $f$ is not injective, then there is some $0\neq a$ $\in Ker(f)$. Let $x,y \in J$, so
$(0,x)$ and $(a,f(a))=(a,0)$ are elements of $A\bowtie^{f}J$. And so, $<(a,0),(0,x)>^2=<(a,0)^2,(0,x)^2>=<(a,0)^2>$ or $<(0,x)^2>$. It follows that $x^2=0$ or $a^2=0$. Since $A$ is reduced and $0\neq a$, then $a^2\neq 0$. Hence, $x^2=0$. Likewise $y^2=0$. Therefore, $xy=0$ since $J$ is an ideal of $f(A)+J$ which is (local) Gaussian by $(1)$ and $<x,y>^2=<x^2,y^2,xy>=<x^2>=0$ or $<y^2>=0$. Hence, $J^2=0,$ as desired.
\end{proof}
The following corollary is a consequence of Theorem \ref{thm0} and is \cite[Theorem 3.2 (2)]{CJKM}.\\
\begin{corollary}\label{cor1}
Let $(A,m)$ be a local ring and $I$ a proper ideal of $A$. Then $A\bowtie I$ is Gaussian if and only if so is $A,I^2=0$ and $aI=a^2I$ for all $a\in m$.
\end{corollary}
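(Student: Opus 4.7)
The plan is to derive Corollary \ref{cor1} from Theorem \ref{thm0} by specializing to $B=A$, $f=\mathrm{id}_A$, $J=I$. Under this specialization $A\bowtie^f J$ coincides with the amalgamated duplication $A\bowtie I$; moreover, since $(A,m)$ is local and $I$ is a proper ideal, $I\subseteq m=\mathrm{Rad}(A)$, so the blanket hypothesis of Theorem \ref{thm0} is satisfied. The backward implication is then a literal restatement of part (2) of Theorem \ref{thm0}: if $A$ is Gaussian, $I^2=0$, and $aI=a^2I$ for all $a\in m$, then $A\bowtie I$ is Gaussian.

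For the forward implication, assume $A\bowtie I$ is Gaussian. Part (1) of Theorem \ref{thm0} immediately yields that $A$ is Gaussian. The main work is to establish $I^2=0$; once this is in hand, part (2) of Theorem \ref{thm0} supplies the condition $aI=a^2I$ for all $a\in m$ for free, and the proof is complete.

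To prove $I^2=0$, I exploit the fact that, because $f$ is the identity, every $x\in I$ yields both $(x,0)$ and $(0,x)$ as elements of $A\bowtie I$ (the first because $0=x+(-x)$ with $-x\in I$, the second by definition). Now $(x,0)(0,x)=(0,0)$, so applying the characterization of a local Gaussian ring to the pair $(x,0),(0,x)$ and invoking the ``moreover'' clause, one of $(x,0)^2=(x^2,0)$ or $(0,x)^2=(0,x^2)$ must vanish; either way $x^2=0$. Hence every element of $I$ is square-zero. Given arbitrary $x,y\in I$, expand $\langle (0,x),(0,y)\rangle^2$: using $x^2=y^2=0$ it collapses to $\langle (0,xy)\rangle$, and the local Gaussian property forces this ideal to equal $\langle(0,x)^2\rangle=0$ or $\langle(0,y)^2\rangle=0$, giving $xy=0$. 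Thus $I^2=0$.

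The only step that requires any care is making sure that the specialization does not silently pick up the ``$A$ reduced'' hypothesis used in part (3) Case 2 of Theorem \ref{thm0}; the route through part (2) avoids this, at the cost of having to prove $I^2=0$ by hand as above. This direct argument is the only genuine obstacle, and the key trick is the simultaneous availability of $(x,0)$ and $(0,x)$ for $x\in I$, which is special to the identity case $f=\mathrm{id}_A$.
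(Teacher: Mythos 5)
Your proposal is correct and follows essentially the same route as the paper: reduce to part (2) of Theorem \ref{thm0} via the specialization $f=\mathrm{id}_A$, $J=I$, and prove $I^2=0$ by hand using the simultaneous membership of $(x,0)$ and $(0,x)$ in $A\bowtie I$ to get $x^2=0$ for all $x\in I$, then squeeze out $xy=0$ from the local Gaussian characterization. The only cosmetic difference is that the paper derives $xy=0$ from $\langle x,y\rangle^2$ inside the local Gaussian ring $A$ itself, while you work with $\langle(0,x),(0,y)\rangle^2$ inside $A\bowtie I$; both are valid.
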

\begin{proof}
 It is easy to see that $A\bowtie I=A\bowtie^f J$ where $f$ is the identity map of $A$, $B=A$ and $J=I$. By Lemma \ref{lem1}, $(A\bowtie I,M)$ is local with $M=m\bowtie I$ since $(A,m)$ is local and $I\subseteq Rad(A)=m$. If $A$ is Gaussian, $I^2=0$ and $aI=a^2I$ for all $ a\in m$, then $A\bowtie I$ is Gaussian by $(2)$ of Theorem \ref{thm0}. Conversely, assume that $A\bowtie I$ is Gaussian. By $(2)$ of Theorem \ref{thm0}, it remains to show that $I^2=0.$ Indeed, let $x,y\in I$. Then, $(x,0),(0,x)\in A\bowtie I$ and $<(x,0),(0,x)>^2=<(x,0)^2>$ or $<(0,x)^2>$. Hence, it follows that $x^2=0$. Likewise $y^2=0$. So, $<x,y>^2=<x^2,y^2,xy>=<x^2>=0$ or $<y^2>=0$ since $A$ is (local) Gaussian. Therefore, $xy=0$. Thus, $I^2=0,$ as desired.
  \end{proof}

 The following example illustrates the statement $(3)$ case $2$ of Theorem \ref{thm0}.\\

 \begin{example}\label{sec:2.5}
 Let $(A,m):=(\mathbb{Z}_{(2)},2\mathbb{Z}_{(2)})$ be a valuation domain, $B:=A\propto A$ be the trivial ring extension of $A$ by $A$. Consider $$\begin{array}{clcl}
  f: & A & \hookrightarrow & B \\
   & a & \hookrightarrow & f(a)=(a,0) \\
\end{array}$$ be an injective ring homomorphism and  $J:=2\mathbb{Z}_{(2)}\propto \mathbb{Z}_{(2)}$ be a proper ideal of $B$. Then, $A\bowtie^{f}J$ is not Gaussian.
  \end{example}
   \begin{proof}
 By application to statement $(3)$ case 2 of Theorem \ref{thm0}, $A\bowtie^{f}J$ is not Gaussian since $A$ is reduced and $J^2\neq 0$.
  \end{proof}
Theorem \ref{thm0} enriches the literature with new examples of non-arithmetical Gaussian rings.

 \begin{example}
 Let $(A,m)$ be an arithmetical ring which is not a field such that $m^2=0$ (for instance $(A=\mathbb{Z}/4\mathbb{Z},2\mathbb{Z}/4\mathbb{Z})$), $E$ be a non-zero $\frac{A}{m}-$vector space, $B:=A\propto E$ be the trivial ring extension of $A$ by $E$. Consider $$\begin{array}{clcl}
  f: & A & \hookrightarrow & B \\
   & a & \hookrightarrow & f(a)=(a,0) \\
\end{array}$$ be an injective ring homomorphism and $J:=I\propto E$ be a proper ideal of $B$ with $I$ be a proper ideal of $A$. Then :\\
 $(1)$ $A\bowtie^{f}J$ is Gaussian.\\
 $(2)$  $A\bowtie^{f}J$ is not an arithmetical ring.\\
   \end{example}
   \begin{proof}
 $(1)$ It is easy to see that $J^2=0,$ $f(a)J=f(a)^2J=0$ for all $a\in m$. Hence, by $(2)$ of Theorem \ref{thm0}, $A\bowtie^{f}J$ is Gaussian.\\
  $(2)$ We claim that $A\bowtie^{f}J$ is not an arithmetical ring. Indeed, $f(A)+J=(A\propto 0)+(I\propto E)=A\propto E$ which is not an arithmetical ring (by \cite[Theorem 3.1 (3)]{bkm} since $A$ is not a field). Hence, $A\bowtie^{f}J$ is not an arithmetical ring since the arithmetical property is stable under factor rings.\\
   \end{proof}

 \begin{example}
 Let $(A_0,m_0)$ be a local Gaussian ring, $E$ be a non-zero $\frac{A_0}{m_0}-$vector space, $(A,m):=(A_0\propto E,m_0\propto E)$ be the trivial ring extension of $A_0$ by $E$. Let $E'$ be a $\frac{A}{m}-$vector space, $B:=A\propto E'$ be the trivial ring extension of $A$ by $E'$. Consider $$\begin{array}{clcl}
  f: & A & \hookrightarrow & B \\
   & (a,e) & \hookrightarrow & f((a,e))=((a,e),0) \\
\end{array}$$ be an injective ring homomorphism and $J:=0\propto E'$ be a proper ideal of $B$. Then :\\
 $(1)$ $A\bowtie^{f}J$ is Gaussian.\\
 $(2)$  $A\bowtie^{f}J$ is not an arithmetical ring.\\
   \end{example}
\begin{proof}
$(1)$ By $(3)$ case $1$ of Theorem \ref{thm0}, $A\bowtie^{f}J$ is Gaussian since $f(A)\cap J=(0)$ and $f(A)+J=B$ which is Gaussian by \cite[Theorem 3.1 (2)]{bkm} since $A$ is Gaussian (since $A_0$ is Gaussian). \\
$(2)$  $A\bowtie^{f}J$ is not an arithmetical ring since $f(A)+J=B$ is not an arithmetical ring (by \cite[Theorem 3.1 (3)]{bkm}, $A$ is never a field). \\
 \end{proof}

\begin{example}
Let $(A_0,m)$ be a non-arithmetical Gaussian local ring such that $m^2\neq m,$ for instance $(A_0,m):=(K[[X]]\propto (\frac{K[[X]]}{(X)})^2,XK[[X]]\propto (\frac{K[[X]]}{(X)})^2)$ (by \cite[Theorem 3.1 (2) and (3) ]{bkm}). Consider $A:=A_0\propto \frac{A_0}{m^2}$ be the trivial ring extension of $A_0$ by $\frac{A_0}{m^2}.$ Let $I:=0\propto \frac{m}{m^2}$ be an ideal of $A,$ $B:=\frac{A}{0\propto \frac{m}{m^2}}\cong \frac{A_0\propto \frac{A_0}{m^2}}{0\propto \frac{m}{m^2}}\cong A_0\propto \frac{A_0}{m}$ be a ring, $f :A \rightarrow B$ be a non-injective ring homomorphism and $J:=0\propto \frac{A_0}{m}$ be a proper ideal of $B$. Then:\\
$(1)$ $A\bowtie^{f}J$ is Gaussian.\\
$(2)$ $A\bowtie^{f}J$ is not an arithmetical ring.\\
 \end{example}
  \begin{proof}
$(1)$ It is easy to check that $f(a)J=0$ for all $a\in m\propto \frac{A_0}{m^2}$ which is the maximal ideal of $A$ and $J\subseteq Nilp(B)$. And so  $f(a)^2J=f(a)J=0$ for all $a\in m\propto \frac{A_0}{m^2},$ $J^2=0$ and by \cite[Theorem 3.1 (2)]{bkm}, $A$ is Gaussian since $A_0$ is Gaussian. Hence, by application to the statement $(2)$ of Theorem \ref{thm0}, $A\bowtie^{f}J$ is Gaussian.\\
$(2)$ $A\bowtie^{f}J$ is not an arithmetical ring since $A$ is not an arithmetical ring (since $A_0$ is not an arithmetical ring and so by \cite[Lemma 2.2]{bkm}, $A$ is not an arithmetical ring).
\end{proof}

We need the following result to construct a new class of non-Gaussian Pr\"ufer rings.
\begin{proposition}\label{prop2}
Let $(A,m)$ be a local total ring of quotients, $B$ be a ring, $f : A \rightarrow B$ be a ring homomorphism and  $J$ be a proper ideal of $B$ such that $J\subseteq Rad(B)$ and $J\subseteq Z(B)$. Then, the following statements hold: \\
$(1)$ Assume that $f$ is injective and $f(A)\cap J\neq (0)$. Then $(A\bowtie^{f}J,m\bowtie^{f}J)$ is a local total ring of quotients; In particular, $A\bowtie^{f}J$ is a Pr\"ufer ring.\\
 $(2)$ Assume that $f$ is not injective. Then, $(A\bowtie^{f}J,m\bowtie^{f}J)$ is a local total ring of quotients; In particular, $A\bowtie^{f}J$ is a Pr\"ufer ring.
\end{proposition}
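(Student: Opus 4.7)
The plan is to reduce the proposition to showing that every element of the maximal ideal of $A\bowtie^{f}J$ is a zero-divisor. By Lemma \ref{lem1}, the hypotheses that $(A,m)$ is local and $J\subseteq Rad(B)$ already yield that $(A\bowtie^{f}J,M)$ is local with $M=m\bowtie^{f}J$; so it is enough to establish $M\subseteq Z(A\bowtie^{f}J)$, after which the Pr\"ufer conclusion follows at once since in a total ring of quotients every finitely generated regular ideal contains a unit.

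The two parts of the proposition share a common backbone: in both cases I would first produce a distinguished element $c\in A\setminus\{0\}$ with $f(c)\in J$. In case (1) the injectivity of $f$ together with $f(A)\cap J\neq (0)$ furnishes such a $c$; in case (2) any nonzero element of $\Ker(f)$ works since then $f(c)=0\in J$. The resulting element $(c,0)\in A\bowtie^{f}J$ is nonzero (because $c\neq 0$) and does lie in the amalgamation since $0-f(c)=-f(c)\in J$, so it serves as a universal test element throughout the argument.

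Fixing an arbitrary $(a,f(a)+j)\in M$ with $a\in m$ and $j\in J$, I would split the analysis into two sub-cases. If $a=0$, the direct computation $(0,j)(c,0)=(0,0)$ exhibits $(c,0)$ as a nonzero annihilator of $(0,j)$. If $a\neq 0$, the hypothesis that $A$ is a total ring of quotients gives $a\in Z(A)$, hence one can pick $0\neq a'\in A$ with $aa'=0$; the natural candidate is then $(a'c,0)$, which lies in $A\bowtie^{f}J$ because $f(a'c)=f(a')f(c)\in J$, and the multiplication $(a,f(a)+j)(a'c,0)=(aa'c,0)=(0,0)$ concludes the proof whenever $a'c\neq 0$.

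The step I expect to be the main obstacle is the degenerate situation in which $c$ happens to annihilate $Ann_{A}(a)$ so that the candidate $(a'c,0)$ collapses to $(0,0)$ for every available $a'$. In that case the remaining hypothesis $J\subseteq Z(B)$ must be invoked sharply: $j$ itself is a zero-divisor of $B$ and $f(a)+j$ is a non-unit of $B$ (since $f(a)$ is a non-unit and $j\in Rad(B)$), which allows an annihilator to be manufactured inside the $\{0\}\times J$ summand of $A\bowtie^{f}J$, for example by perturbing the second coordinate of $(a',f(a'))$ by a suitable $k\in J$ solving $k(f(a)+j)=-jf(a')$ inside $J$. Assembling these sub-cases yields $M\subseteq Z(A\bowtie^{f}J)$, and treats both (1) and (2) uniformly.
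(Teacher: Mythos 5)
Your overall strategy coincides with the paper's: use Lemma \ref{lem1} to get that $(A\bowtie^{f}J,m\bowtie^{f}J)$ is local, reduce to showing every element of $m\bowtie^{f}J$ is a zero-divisor, and build an annihilator of $(a,f(a)+j)$ out of an annihilator $a'$ of $a$ in $A$ together with an element $c\neq 0$ satisfying $f(c)\in J$ (coming from $f(A)\cap J\neq(0)$ in case (1) and from $\Ker(f)$ in case (2)). The sub-cases $a=0$ and $a\neq 0$ with $a'c\neq 0$ are fine. The problem is exactly where you flagged it: the degenerate case. Your proposed repair asks for $k\in J$ with $k(f(a)+j)=-jf(a')$, and there is no reason such a $k$ exists: this requires $-jf(a')\in (f(a)+j)J$, which fails in general when $f(a)+j$ is a non-unit (note that your observation that $f(a)+j$ is a non-unit points the wrong way --- the equation is automatically solvable when $f(a)+j$ \emph{is} invertible, by $k=-(f(a)+j)^{-1}jf(a')\in J$, and it is precisely the non-unit case that is obstructed). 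As a side remark, the assertion that $f(a)$ is a non-unit of $B$ because $a\in m$ is also unjustified for an arbitrary ring homomorphism $f$. So as written the argument does not close.

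The paper avoids this division problem by a different choice of annihilator: it picks $0\neq k\in J$ with $jk=0$ (invoking $J\subseteq Z(B)$) and uses the element $(bc,f(b)k)$, whose product with $(a,f(a)+j)$ is $(abc,\,f(ab)k+f(b)(jk))=(0,0)$; both coordinates vanish for structural reasons ($ab=0$ and $jk=0$), so no equation in $k$ has to be solved. If you want to complete your argument, you should switch to an annihilator of that shape rather than perturbing the second coordinate of $(a',f(a'))$. (Even then, two points deserve care that the paper leaves implicit: that an annihilator of $j$ can be found \emph{inside} $J$ rather than merely in $B$, and that the resulting element $(bc,f(b)k)$ is nonzero. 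But those are the right issues to address, whereas the linear equation you propose is genuinely unsolvable in general.)
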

\begin{proof}
By Lemma \ref{lem1}, it is clear that $(A\bowtie^{f}J,m\bowtie^{f}J)$ is local. \\
$(1)$ Assume that $f(A)\cap J\neq (0)$. We claim that $A\bowtie^{f}J$ is a total ring of quotients. Indeed, let $(a,f(a)+j)\in$ $A\bowtie^{f}J,$
 we prove that $(a,f(a)+j)$ is invertible or zero-divisor element. If $a\not \in m$, then $(a,f(a)+j)\not \in m\bowtie^{f}J$. And so $(a,f(a)+j)$ is invertible in $A\bowtie^{f}J$. Assume that $a\in m.$ So, $(a,f(a)+j)\in m\bowtie^{f}J$. Since $A$ is a total ring of quotients, there exists $0\neq b\in A$ such that $ab=0$. We have $(a,f(a)+j)(b,f(b))=(0,jf(b))$. Using the fact $f(A)\cap J\neq (0)$ and $J\subseteq Z(B),$ there exists some $0\neq f(c)\in J$ and $0\neq k\in J$ such that $jk=0$ and so $(c,k)\in A\bowtie^{f}J$. It follows that $(a,f(a)+j)(bc,f(b)k)=(0,0).$ Hence, there exists $(0,0)\neq (bc,f(b)k)\in A\bowtie^{f}J$ such that $(a,f(a)+j)(bc,f(b)k)=(0,0)$. Thus, $(A\bowtie^{f}J,m\bowtie^{f}J)$ is local total ring of quotients.

$(2)$ Assume that $f$ is not injective. Our aim is to show that $A\bowtie^{f}J$ is a total ring of quotients. We prove that for each element $(a,f(a)+j)$ of $A\bowtie^{f}J$ is invertible or zero-divisor element. Indeed, if $a\not \in m$, then $(a,f(a)+j)\not \in m\bowtie^{f}J$. And so $(a,f(a)+j)$ is invertible in $A\bowtie^{f}J$. Assume that $a\in m.$ So, $(a,f(a)+j)\in m\bowtie^{f}J$. Since $A$ is a total ring of quotients, there exists $0\neq b\in A$ such that $ab=0$. We have $(a,f(a)+j)(b,f(b))=(0,jf(b))$. Using the fact $f$ is not injective and $J\subseteq Z(B)$, there exist some $0\neq c\in \Ker(f)$ and $0\neq k\in J$ such that $jk=0$ and $(c,k)\in A\bowtie^{f}J$. It follows that $(a,f(a)+j)(bc,f(b)k)=(0,0)$. Hence, there exists $(0,0)\neq (bc,f(b)k)\in A\bowtie^{f}J$ such that $(a,f(a)+j)(bc,f(b)k)=(0,0)$. Thus, $(A\bowtie^{f}J,m\bowtie^{f}J)$ is a local total ring of quotients, completing the proof.
\end{proof}

Also, Theorem \ref{thm0} enriches the literature with new class of non-Gaussian Pr\"ufer rings.\\
\begin{example}
Let $(A,m)$ be a local total ring of quotients and $I$ be a proper ideal of $A$ such that $I^2\neq 0$. Then :
\begin{enumerate}
\item $A\bowtie I$ is Pr\"ufer.
\item $A\bowtie I$ is not Gaussian.
\end{enumerate}
 \end{example}
\begin{proof}
$(1)$ By $(1)$ of Proposition \ref{prop2}, $A\bowtie I$ is a local total ring of quotients since $A$ is a local total ring of quotients, $A\cap I\neq (0)$ and $I\subseteq m \subseteq Z(A).$\\
$(2)$ By Corollary \ref{cor1}, $A\bowtie I$ is not Gaussian since $I^2\neq 0$.
 \end{proof}

 \begin{example}
Let $(A_0,m_0):=(\mathbb{Z}/2^n\mathbb{Z},2\mathbb{Z}/2^n\mathbb{Z})$ where $n\geq2$ be an integer and let $(A,m):=(A_0\propto A_0,m_0\propto A_0)$ be the trivial ring extension of $A_0$ by $A_0$. Consider $E$ be a non-zero $A$-module such that $mE=0,$ $B:=A\propto E$ be the trivial ring extension of $A$ by $E,$ $$\begin{array}{clcl}
  f: & A & \hookrightarrow & B \\
   & (a,e) & \hookrightarrow & f((a,e))=((a,e),0) \\
\end{array}$$ be an injective ring homomorphism and $J:=m\propto E$ be a proper ideal of $B$. Then, the following statements hold :
\begin{enumerate}
\item $A\bowtie^{f}J$ is Pr\"ufer.
\item $A\bowtie^{f}J$ is not Gaussian.
\end{enumerate}
 \end{example}
\begin{proof}
$(1)$ By $(1)$ of Proposition \ref{prop2}, $A\bowtie^{f}J$ is a local total ring of quotients since $f(A)\cap J=m\propto 0\neq (0),$ $A$ is a local total ring of quotients and $J\subseteq Z(B)$. In particular, $A\bowtie^{f}J$ is a Pr\"ufer ring.\\
$(2)$ $A\bowtie^{f}J$ is not Gaussian since $A$ is not Gaussian (by \cite[Example 3.6]{bmm}).
\end{proof}

\begin{example}
Let $K$ be a field and let $(A_0,m):=(K[[X,Y]],<X,Y>)$ be the ring of formal power series where $X$ and $Y$ are two indeterminate elements. Consider $A:=A_0\propto \frac{A_0}{m^2}$ be the trivial ring extension of $A_0$ by $\frac{A_0}{m^2}.$ Note that $I:=0\propto \frac{m}{m^2}$ is an ideal of $A$. Let $B:=\frac{A}{I}$ be a ring, $f :A \rightarrow B$ be a non-injective ring homomorphism and $J:=\frac{0\propto \frac{A_0}{m^2}}{I}$ be a proper ideal of $B$. Then:\\
$(1)$ $A\bowtie^{f}J$ is $Pr\ddot{u}fer$.\\
$(2)$ $A\bowtie^{f}J$ is not Gaussian.\\
 \end{example}
 \begin{proof}
 $(1)$ One can easily check that $A$ is a local total ring of quotients, $B:=\frac{A}{0\propto \frac{m}{m^2}}\cong A\propto \frac{A}{m},$ $J:=\frac{0\propto \frac{A}{m^2}}{0\propto \frac{m}{m^2}}\cong 0\propto \frac{A}{m}$ and $J\subseteq Z(B)$. Moreover, $J\subseteq Rad(B)=m\propto \frac{A}{m}$ since $B$ is local with maximal ideal $m\propto \frac{A}{m}.$ Hence, by $(2)$ of Proposition \ref{prop2}, $A\bowtie^{f}J$ is local total ring of quotients. Thus, $A\bowtie^{f}J$ is $Pr\ddot{u}fer$.\\
 $(2)$ $A\bowtie^{f}J$ is not Gaussian since $A$ is not Gaussian (since $A_0:=K[[X,Y]]$ is a domain such that $w.dim(K[[X,Y]])=2)$.\\
\end{proof}

\bibliographystyle{amsplain}

\end{document}